\newtheoremstyle{nonum}{}{}{\itshape}{}{\bfseries}{.}{ }{\thmnote{#3}}
\newtheorem{thm}{Theorem}[section]
\newtheorem{cor}[thm]{Corollary}
\newtheorem{prop}[thm]{Proposition}
\theoremstyle{definition}
\newtheorem{exam}[thm]{Example}
\newtheorem{defn}[thm]{Definition}
\newtheorem{rem}[thm]{Remark}
\newtheorem{problem}[thm]{Problem}
\theoremstyle{nonum}
\DeclareMathOperator{\dist}{dist}
\newcommand{\tit}{\textit}
\newcommand{\mcal}{\mathcal}
\newcommand{\al}{\alpha}
\newcommand{\be}{\beta}
\newcommand{\eps}{\varepsilon}
\newcommand{\ph}{\varphi}
\newcommand{\om}{\omega}
\newcommand{\ze}{\zeta}
\newcommand{\R}{\mathbb R}
\newcommand{\C}{\mathbb C}
\newcommand{\Z}{\mathbb Z}
\newcommand{\N}{\mathbb N}
\newcommand{\const}{\equiv}
\newcommand{\pois}[1]{\{#1\}} 
\newcommand{\br}[1]{\bigl({#1}\bigr)}
\newcommand{\imp}{\Rightarrow}
\DeclareMathOperator{\ad}{ad}
\newcommand{\cA}{{\mathcal{A}}}
\newcommand{\cE}{{\mathcal{E}}}
\newcommand{\cF}{{\mathcal{F}}}
\newcommand{\cH}{{\mathcal{H}}}
\newcommand{\cK}{{\mathcal{K}}}
\newcommand{\cP}{{\mathcal{P}}}
\newcommand{\tHam}{{\widetilde{\hbox{\it Ham}}\, (M,\om)}}
\newcommand{\Ham}{{\hbox{\it Ham}}\, (M,\om)}
\begin{document}

\title{Poisson brackets, quasi-states and symplectic integrators}

\renewcommand{\thefootnote}{\alph{footnote}}

\author{\textsc Michael Entov$^{a}$,\ Leonid
Polterovich$^{b}$, Daniel Rosen $^{b}$}

\footnotetext[1]{Partially supported by the Israel Science
Foundation grant $\#$ 881/06.} \footnotetext[2]{Partially supported
by the Israel Science Foundation grant $\#$ 509/07.}

\date{\today}

\maketitle

\begin{abstract}
This paper is a fusion of a survey and a research article. We
focus on certain rigidity phenomena in function spaces associated
to a symplectic manifold. Our starting point is a lower bound
obtained in an earlier paper with Zapolsky for the uniform norm of
the Poisson bracket of a pair of functions in terms of symplectic
quasi-states. After a short review of the theory of symplectic
quasi-states, we extend this bound to the case of iterated Poisson
brackets. A new technical ingredient is the use of symplectic
integrators. In addition, we discuss some applications to
symplectic approximation theory and present a number of open
problems.
\end{abstract}

\tableofcontents

\vfill\eject

\section{Introduction and main results}

We discuss certain aspects of function theory on symplectic
manifolds related to the so-called {\it $C^0$-rigidity of the
Poisson bracket} which can be described as follows.

Let $(M, \omega)$ be a closed symplectic manifold. Denote by
$C^\infty (M)$ the space of smooth functions on $M$ and by $\|\cdot
\|$ the standard {\it uniform norm}  (also called the {\it
$C^0$-norm}) on it: $ \| F\| := \max_{x\in M} |F(x)|$. The Poisson
bracket on $C^\infty (M)$, induced by $\omega$, will be denoted by
$\{\cdot, \cdot\}$. Applying the Poisson bracket repeatedly we get
so-called {\it iterated Poisson brackets} of functions from
$C^\infty (M)$.

Note that the definition of an iterated Poisson bracket of two
smooth functions $F, G\in C^\infty (M)$ involves partial
derivatives of the functions. Thus \emph{a priori} one does not
expect any restrictions on possible changes of the Poisson bracket under $C^0$-small
perturbations of $F$ and $G$. Surprisingly, such
restrictions do exist (see \cite{CV, EPZ, Zap1, Hum, EP-Poisson1,
EP-Poisson2, Buh}).

In this paper we will discuss such restrictions coming from the
theory of {\it symplectic quasi-states} and our starting point is
a lower bound on the $\| \{ F,G\}\|$ in terms of certain
symplectic quasi-states obtained in \cite{EPZ}. We recall that
symplectic quasi-states are functionals on $C^{\infty} (M)$
introduced in \cite{EP-qs} which obey a convenient set of axioms
of an algebraic flavor and, furthermore, are Lipschitz with
respect to the uniform norm, so that the above-mentioned lower
bound is robust with respect to $C^0$-perturbations of the
functions and thus provides a restriction of the needed sort. Let
us also note that when $\dim M \geq 4$, only currently known
symplectic quasi-states come from Floer theory, see
Example~\ref{exam-qm} below for a brief discussion.

In the present paper we extend the bound from \cite{EPZ} to the
case of iterated Poisson brackets. A new technical ingredient of
the paper is an unexpected use of {\it symplectic integrators}
whose origins lie in numerical Hamiltonian dynamics.

In addition, we discuss some applications to symplectic
approximation theory, which are in the spirit of \cite{EPZ} but
presented under a somewhat different angle, and formulate a number
of open problems.

\subsection{Quasi-morphisms and quasi-states}

A \emph{homogeneous quasi-morphism} on a group $G$ is a function
$\mu \colon G \to \R$ satisfying
\begin{enumerate}[$(i)$]
\item There exists a constant $C$ such that for any $g,h \in G$
\begin{equation*}
|\mu(gh) - \mu(g) - \mu(h) | \leq C.
\end{equation*}
\item For any $g \in G$ and $n \in \Z$ one has $\mu(g^n) = n
\mu(g)$.
\end{enumerate}

\medskip
\noindent Nowadays quasi-morphisms are intensively studied due to
their importance in  group theory as well as their applications in
geometry and dynamics (see e.g. \cite{Kotschick} for a brief
introductory survey).

In this paper we are concerned with quasi-morphisms on a certain
important group appearing in symplectic geometry. Let us recall
its definition along with a few preliminaries (see \cite{MS,Pbook}
for more details).

Given a time-dependent Hamiltonian $H: M\times [0,1]\to\R$, denote
$H(t):= H(\cdot,t): M\to\R$. We say that $H(t)$ is {\it
normalized} if it has zero mean for all $t$. The set of all
normalized (time-dependent) Hamiltonian functions is denoted by
$\cF$. Symplectomorphisms of $(M,\omega)$ which can be included in
a Hamiltonian flow (i.e. the flow of a time-dependent Hamiltonian
vector field) generated by a Hamiltonian from $\cF$ form a group
$\Ham$. Let $\tHam$ be the universal cover of $\Ham$ with the base
point at the identity. This is precisely the group of interest for
us and we will consider quasi-morphisms on this group. Given a
(time-dependent) Hamiltonian $H$ on $M$, we denote by $\phi^t_H$
the Hamiltonian flow generated by $H$. We write $\phi_H$ for the
element of $\tHam$ represented by the path $\{\phi^t_H\}$, $t \in
[0,1]$.

\medskip
\noindent{\bf Convention:} Let $\text{proj}: \tHam \to \Ham$ be the
the natural projection. In what follows for the sake of brevity we
write $F \circ \phi$ instead of $F \circ \text{proj}(\phi)$, where
$\phi \in \tHam$ and $F \in C^{\infty}(M)$.

\medskip

A homogeneous quasi-morphism $\mu$ on $\tHam$ is called
\emph{stable}, if for any (time-dependent) Hamiltonian functions
$F, G\in\cF$
\begin{equation}
\int_0^1 \min_M (F(t) - G(t)) d t \leq \mu(\phi_G) - \mu(\phi_F)
\leq \int_0^1 \max_M (F(t) - G(t)) d t\;.
\end{equation}
In particular, for a stable $\mu$ one has
\begin{equation}
\label{eqn-mu-Lipschitz} \|\mu(\phi_F) - \mu(\phi_G)\| \leq
\int_0^1 \| F(t) - G(t)\| dt.
\end{equation}

Though the definition of a stable quasi-morphism is fairly simple,
it is not easy to prove its existence. Nowadays such quasi-morphisms
are constructed for certain symplectic manifolds (for instance, for
complex projective spaces and their products) with the help of the
Floer theory. However, for instance, it is unknown whether such a
quasi-morphism exists on the standard symplectic torus  -- see
\cite{EP-toric} for a detailed discussion of symplectic manifolds
admitting quasi-morphisms constructed by means of the Floer theory.

\medskip
\noindent
\begin{exam}\label{exam-qm} {\rm
To give the reader a feeling of what we are talking about, let us
very briefly outline construction of a stable quasi-morphism on
$\tHam$ where $(M,\omega)$ is the complex projective space $\C
P^n$ equipped with the standard Fubini-Study form, normalized so
that the total symplectic volume of $\C P^n$ is 1. Let $\Lambda$
be a covering of the free loop space of $M$ whose elements are
equivalence classes of pairs $(\gamma, u)$, where $\gamma: S^1 \to
M$ is a loop and $u:D^2 \to M$ is a disc with $u|_{\partial
D^2}=\gamma$. The equivalence relation is defined as follows:
$(\gamma_1,u_1)\sim (\gamma_2,u_2)$ whenever $\gamma_1 =\gamma_2$,
while the discs $u_1$ and $u_2$ are homotopic with fixed boundary.
Every Hamiltonian $H \in \cF$ defines {\it the classical action
functional}
$$\cA_H : \Lambda \to \R,\;\; [(\gamma, u)] \mapsto \int_0^1
H(\gamma(t),t) dt - \int_{D^2} u^*\omega\;.$$ Its critical points
correspond to $1$-periodic orbits of the Hamiltonian flow
$\phi^t_H$, or in other words to fixed points of its time-one map.
Floer theory is an infinite-dimensional version of the
Morse-Novikov homology theory for the action functional $\cA_H$ on
$\Lambda$.  In particular, given $\alpha \in \R$, it enables one
to define Floer homology groups of the sublevel sets $\{\cA_H <
\alpha\}$ which we denote $HF(\alpha)$. These groups come with
natural inclusions $j_{\alpha}: HF(\alpha) \to HF(\infty)$. The
group $HF(\infty)$ can be identified via the
Piunikhin-Salamon-Schwarz isomorphism with another remarkable
object, the quantum homology algebra of $M$. This is an algebra
with the unity element $e$. The next construction (due to Oh,
Schwarz and Viterbo) is a Floer homological analogue of the
standard min-max: put
$$c(H):= \inf \{\alpha\;:\; e \in \text{Image}(j_\alpha)\}\;.$$
It turns out that the value $c(H)$ depends only on the time-one
map $\phi_H \in \tHam$ and not on the Hamiltonian $H$ itself.
Writing $c(\phi_H):=c(H)$, we get a map $c: \tHam \to \R$. The
specific algebraic structure of the quantum homology algebra  of
$\C P^n$ guarantees that $$|c(\phi\psi)-c(\phi)-c(\psi)| \leq
\text{const}\;\; \forall \phi,\psi \in \tHam\;$$ for some constant
independent of $\phi, \psi$. Therefore the homogenization (with
the opposite sign)
$$\mu(\phi) :=- \lim_{k \to \infty} \frac{c(\phi^k)}{k}$$
of the function $\phi\mapsto c(\phi)$ is a homogeneous
quasi-morphism. A deep fact of the Floer theory is that $\mu$ is
stable. We refer the reader to \cite{MS} and to references therein
for the relevant preliminaries on Floer theory and quantum
homology.}
\end{exam}

\medskip

Any stable homogeneous quasi-morphism $\mu$ on $\tHam$ induces a
functional $\ze \colon C^{\infty}(M) \to \R$ as follows. From now on
assume for simplicity that the symplectic volume $\int_M \om^n$ of
$M$ equals $1$. For $F \in C^{\infty} (M)$, set
\begin{equation*}
\ze(F) = \int_M F \om^n - \mu(\phi_F).
\end{equation*}

The functional $\ze$ -- which is, in general, {\it non-linear} --
satisfies the following system of axioms \cite{EPZ}:
\begin{enumerate}[(i)]
\item $\ze(1) = 1$.
\item\label{qs_monoton_axiom} $F \geq G \imp \ze(F) \geq \ze(G)$.
\item\label{qs_linear_axiom} $\ze(aF + bG) = a \ze(F) + b\ze(G)$ for any $F,G \in C^{\infty}(M)$
such that $\pois{F,G} \const 0$ (that is $F,G$ commute with respect
to the Poisson bracket) and any $a, b \in \R$.
\end{enumerate}

\medskip
\noindent The functionals satisfying (i)-(iii) were introduced in
\cite{EP-qs} and are called {\it symplectic quasi-states}. An
important feature of symplectic quasi-states is the they are
Lipschitz in the uniform norm $||F||=\max |F|$ on $C^\infty(M)$:
\begin{equation}\label{eq-ze-Lipschitz}
\ze(F) = \ze(G + F - G) \leq \ze(G + ||F-G||) = \ze(G) + ||F-G||\;.
\end{equation}
The theory of symplectic quasi-states lies on the borderline
between symplectic geometry and functional analysis and its
origins lie in mathematical foundations of quantum mechanics.
In particular, any symplectic quasi-state extends to a
topological quasi-state in the sense of Aarnes \cite{Aarnes} on
the space of continuous functions $C(M)$: This means that it is
linear on any singly-generated subalgebra of $C(M)$. For
further discussion on quasi-morphisms and quasi-states on
symplectic manifolds, see \cite{EP-qs, EP-toric, EPZ, EPZ-physics,
Zap1, Zap2}.

\medskip
\noindent \begin{exam}\label{exam-qs}{\rm In the case when $M = \C
P^1 =S^2$ and the symplectic form is the standard area form
normalized so that the total area of $S^2$ is 1, the quasi-state
$\zeta$ associated to the stable quasi-morphism from
Example~\ref{exam-qm} can be described in elementary combinatorial
terms. Let $F$ be a generic Morse function on $M$. Its {\it Reeb
graph} $\Gamma$ is obtained by collapsing connected components of
the level sets of $F$ to points. It is easy to see that $\Gamma$
is a tree. Let $\sigma$ be the push-forward of the symplectic area
from $M$ to $\Gamma$. One can show that there exists unique point
$m \in \Gamma$, called {\it the median}, so that $\sigma(Y) \leq
1/2$ for every connected component $Y$ of $\Gamma \setminus m$. It
turns out that $\zeta(F)$ equals to the value of $F$ on the
connected component of its level set which corresponds to $m$.}
\end{exam}

\medskip

Given  a quasi-state $\ze$, we can define a functional
$$
\Pi(F,G) = |\ze(F+G) - \ze(F) - \ze(G)| $$ on the space $\cH:=
C^{\infty}(M) \times C^{\infty}(M)$. Let $d$ be the uniform distance
on  $\cH$ :
$$d((F,G),(F',G')):= ||F-F'||+||G-G'||\;.$$
It follows from \eqref{eq-ze-Lipschitz} that $\Pi$ is Lipschitz with
respect to $d$:
\begin{equation}\label{Pi_Lipschitz}
|\Pi(F,G) - \Pi(F',G')| \leq 2d((F,G),(F',G'))\;.
\end{equation}

\medskip
\noindent {\it From now on we assume that $\zeta$ is a symplectic
quasi-state associated to a stable homogeneous quasi-morphism on}
$\tHam$.

\subsection{Lower bound on higher Poisson brackets}

There is another class of functionals on $\cH:= C^{\infty}(M)
\times C^{\infty}(M)$ of a more classical nature. Denote by
$\mcal{P}_N$ the set of Lie monomials in two variables involving
$N$-times-iterated Poisson brackets (i.e. $\mcal{P}_1$ consists
 of $\pois{F,G}$, $\mcal{P}_2$ of $\pois{\pois{F,G},F}$ and $\pois{\pois{F,G},G}$ and so on).
 For $F,G \in C^{\infty}(M)$ set
\begin{equation*}
Q_N(F,G) = \sum_{p \in \mcal{P}_{N-1}} || p(F,G)||.
\end{equation*}

Our main result is:
\begin{thm}\label{main_thm}
Let $\ze \colon C(M) \to \R$ be a symplectic quasi-state induced by a stable homogeneous quasi-morphism, and let
\begin{equation*}
\Pi(F,G) = |\ze(F+G) - \ze(F) - \ze(G)|.
\end{equation*}
Then there exist constants $C_N$ for any $N\in\N$ so that
\begin{equation}\label{new_bound}
\Pi(F,G) \leq C_N \cdot Q_N(F,G)^{1/N}
\end{equation}
for any $F,G \in C^{\infty}(M)$.
\end{thm}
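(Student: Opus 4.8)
The plan is to bootstrap the known $N=1$ estimate $\Pi(F,G) \le 2\|\{F,G\}\| = 2 Q_1(F,G)$ (this is the bound from \cite{EPZ}, which is the case $p = \{F,G\}$, $\mcal P_0 = \{F,G\}$ being the "monomial of length zero") by introducing an auxiliary Hamiltonian flow and exploiting the Lipschitz property \eqref{eqn-mu-Lipschitz} of the quasi-morphism together with the linearity axiom \eqref{qs_linear_axiom}. The central idea: instead of comparing $\mu(\phi_{F+G})$ directly with $\mu(\phi_F)$ and $\mu(\phi_G)$, we interpolate through a cleverly chosen path in $\tHam$. The naive composition $\phi_F \phi_G$ is \emph{not} $\phi_{F+G}$ — the discrepancy is governed by the Baker–Campbell–Hausdorff series, whose terms are precisely the iterated Poisson brackets in $\mcal P_{\ge 1}$. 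Concretely, if one runs the flows $\phi^t_F$ and $\phi^t_G$ alternately in many small steps of size $1/n$ — that is, forms the symplectic integrator $\Psi_n := \big(\phi^{1/n}_F \phi^{1/n}_G\big)^n$ — then $\Psi_n$ approximates $\phi_{F+G}$ with an error that, by the standard splitting (Lie–Trotter / Strang) error analysis, is controlled in Hamiltonian terms by $Q_2(F,G)/n$, or with a symmetric (Strang) splitting by $Q_3(F,G)/n^2$, and with higher-order composition methods by $Q_{N}(F,G)/n^{N-1}$ for any fixed $N$.

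The steps, in order. First, fix $N$ and invoke a symplectic integrator of order $N-1$: a composition $\Psi_n$ of the time-$(\pm c_j/n)$ maps of $\phi_F$ and $\phi_G$ (with the $c_j$ the fixed coefficients of, e.g., a Yoshida-type composition scheme) such that $\Psi_n = \phi_{K_n}$ in $\tHam$ for a Hamiltonian $K_n \in \cF$ with
\begin{equation*}
\int_0^1 \big\| K_n(t) - (F+G) \big\|\, dt \le \frac{A_N}{n^{N-1}}\, Q_N(F,G),
\end{equation*}
where $A_N$ depends only on $N$; this is the technical heart and comes from iterating the BCH expansion and collecting the Lie monomials of length $\le N-1$ that appear as the leading correction terms, the remainder being of order $n^{-N}$ times a bounded quantity. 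Second, apply \eqref{eqn-mu-Lipschitz} to get $|\mu(\phi_{F+G}) - \mu(\Psi_n)| \le A_N n^{-(N-1)} Q_N(F,G)$. Third, evaluate $\mu(\Psi_n)$ using the quasi-morphism defect: $\Psi_n$ is a product of $O(n)$ flow-pieces of $F$ and $G$, so $|\mu(\Psi_n) - \sum (\text{terms } \mu(\phi^{c_j/n}_F), \mu(\phi^{c_j/n}_G))| \le (\text{number of factors}) \cdot C_\mu = O(n) \cdot C_\mu$; moreover $\mu(\phi^{s}_F) = \mu(\phi_{sF})$, and since $F$ and $sF$ Poisson-commute the associated quasi-state is \emph{linear} along this ray, giving $\mu(\phi^{s}_F) = s\,\mu(\phi_F) + (s-1)\int_M F\om^n$ exactly (homogeneity plus the affine correction from the volume normalization). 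Summing the $O(n)$ pieces and using that the coefficients $c_j$ sum appropriately, the linear parts telescope to $\mu(\phi_F) + \mu(\phi_G)$ plus an $O(C_\mu/n)$ error from the homogeneity defects. Translating back to $\zeta$ via $\zeta(\cdot) = \int_M(\cdot)\om^n - \mu(\phi_\cdot)$ yields
\begin{equation*}
\Pi(F,G) \le \frac{A_N}{n^{N-1}}\, Q_N(F,G) + \frac{B_N}{n}
\end{equation*}
for constants $A_N, B_N$ depending only on $N$ (and on the quasi-morphism's defect $C_\mu$). Finally, optimize over $n \in \N$: the two terms balance at $n \sim Q_N(F,G)^{-1/N}$, yielding $\Pi(F,G) \le C_N\, Q_N(F,G)^{1/N}$, with a routine check that when $Q_N(F,G)$ is large one falls back on the trivial bound $\Pi(F,G) \le 2\|F\| + 2\|G\|$ or re-scales so the estimate is uniform.

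The main obstacle is the first step: constructing the comparison Hamiltonian $K_n$ and certifying the bound $\int_0^1 \|K_n(t) - (F+G)\|\,dt = O(n^{-(N-1)}Q_N(F,G))$ rigorously. The subtlety is that on a general (non-exact, compact) symplectic manifold one cannot literally write a convergent BCH series for Hamiltonians, so the "error estimate" must be done pathwise on the level of flows — e.g. by differentiating the flow of the integrator and comparing with the flow of $F+G$ via a Duhamel/Gronwall argument — and the discrepancy has to be expressed as an \emph{honest} time-dependent Hamiltonian (not merely an operator-norm estimate on diffeomorphisms), so that \eqref{eqn-mu-Lipschitz} applies. This is where the theory of symplectic integrators enters substantively: one borrows the classical backward-error-analysis statement that the numerical flow $\Psi_n$ is \emph{exactly} the time-$1$ flow of a modified Hamiltonian $\widetilde H_n = F+G + O(n^{-(N-1)})$, with the correction terms being universal Lie polynomials in $F,G$ of the relevant degree, and one must make this rigorous and quantitative in the $C^0$-norm on a compact manifold, controlling the remainder uniformly. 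Everything downstream — the quasi-morphism bookkeeping, the use of linearity on Poisson-commuting rays, the optimization over $n$ — is then essentially elementary.
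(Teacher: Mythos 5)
Your overall strategy --- approximate $\phi_{F+G}$ by a composition of flows of multiples of $F$ and $G$ coming from a symplectic integrator of order $N-1$, control the discrepancy through a modified-Hamiltonian estimate of size $Q_N$, then combine stability of $\mu$, the quasi-morphism defect, and linearity of $\zeta$ along commuting rays, and finally optimize a two-term bound --- is essentially the paper's. But your bookkeeping of the defect term is wrong, and the error breaks the argument in the only interesting regime. For a product of $K$ group elements the quasi-morphism inequality gives $|\mu(g_1\cdots g_K)-\sum_i\mu(g_i)|\le (K-1)C_\mu$; your $\Psi_n$ has roughly $2mn$ factors, so this step costs an error of order $n\,C_\mu$, which \emph{grows} with $n$, not the $O(C_\mu/n)$ you claim. (Sanity check: your displayed bound $\Pi(F,G)\le A_N n^{-(N-1)}Q_N(F,G)+B_N n^{-1}$ would, upon letting $n\to\infty$, give $\Pi\equiv 0$, i.e.\ that $\zeta$ is linear, contradicting the existence of $\zeta$-extremal pairs as in Example~\ref{exam-sphere}.) With the corrected bound $\Pi\le A_N n^{-(N-1)}Q_N+B'_N n$ the optimal choice is $n\sim Q_N^{1/N}$, which is $<1$ precisely when $Q_N(F,G)$ is small; since your $n$ is an integer number of sub-steps, you are forced to $n=1$ and obtain only $\Pi\le A_N Q_N+B'_N$, which does not tend to $0$ as $Q_N\to 0$. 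Subdividing into many small steps goes in the wrong direction: every additional factor costs a full defect constant, while the improvement in the integrator error is worthless once $Q_N$ is already small.

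The fix --- and what the paper does --- is the opposite scaling. Apply the order-$(N-1)$ integrator once, with its fixed number $2m$ of factors, to get the ``unbalanced'' inequality $\Pi(F,G)\le 2mC+k\,Q_N(F,G)$ (this uses Proposition~\ref{remainder-prop-1}, stability, the defect bound $2mC$, and $\sum_i\zeta(\al_i F)=\zeta(F)$, $\sum_i\zeta(\be_i G)=\zeta(G)$). Then exploit the homogeneities $\Pi(EF,EG)=E\,\Pi(F,G)$ and $Q_N(EF,EG)=E^N Q_N(F,G)$: applying the inequality to $(EF,EG)$ and dividing by $E$ gives $\Pi(F,G)\le 2mC/E+kE^{N-1}Q_N(F,G)$ for every real $E>0$, and minimizing over $E$ --- with no integrality constraint, and with $E_{\min}\sim Q_N^{-1/N}$ \emph{large} when $Q_N$ is small, i.e.\ one long time step rather than many short ones --- yields the desired $\Pi\le C_N Q_N(F,G)^{1/N}$. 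Finally, the modified-Hamiltonian estimate that you correctly single out as the technical heart is asserted but not proved in your proposal; the paper establishes it (Proposition~\ref{remainder-prop-1}) via the cocycle formula together with an explicit Taylor expansion with integral remainder for compositions $A\circ h_t^{(1)}\circ\cdots\circ h_t^{(n)}$, precisely so as to avoid any appeal to a convergent BCH series on a general closed symplectic manifold.
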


\medskip
\noindent Theorem~\ref{main_thm} is another display of
$C^0$-rigidity of (iterated) Poisson brackets mentioned in the
introduction: since the definition of $p(F,G)$, $p\in\cP_{N-1}$,
involves partial derivatives of functions $F$ and $G$, there are no
\emph{a priori} restrictions on the changes of $Q_N(F,G)$ under
perturbations of $F$ and $G$ in the uniform norm. At the same time
the functional $\Pi$ is Lipschitz in the uniform norm, and hence
inequality \eqref{new_bound} yields such a restriction.

For the case $N=2$ inequality \eqref{new_bound} had been proved in
an earlier paper \cite{EPZ}.

\subsection{Application to symplectic approximation
theory}\label{subsec-app}

The discussion below was initiated in \cite{EPZ}, though here we
slightly change the viewpoint. The problem we are going to deal with
can be roughly stated as follows.

\medskip
\noindent \begin{problem}\label{prob-appr} Given a pair of
functions on a symplectic manifold, what is its optimal uniform
approximation by a pair of (almost) Poisson-commuting functions?
\end{problem}

\medskip
\noindent The value $\Pi(F,G)$ can be considered as a fancy
measure of non-com\-mu\-ta\-ti\-vi\-ty of functions $F,G \in
C^{\infty}(M)$. We illustrate this as follows: Note that
inequality \eqref{Pi_Lipschitz} implies that
$$\Pi(F,G) \leq 2\max(||F||,||G||)\;.$$ This motivates the following definition:
We say that a pair of functions $F,G$ is $\zeta$-{\it extremal} if
$||F||=||G||=1$ and the previous inequality is an equality:
$$\Pi(F,G) = 2\;.$$ We denote by $\cE \subset \cH$ the subset of all
$\zeta$-extremal pairs and by $\cK\subset \cH$ the set of all
Poisson-commuting pairs: $\cK := \{ (F,G)\in \cH\ |\ \{
F,G\}\equiv 0\}$.

\medskip
\noindent \begin{exam} \label{exam-sphere}{\rm Let $M = S^2 \times
\ldots \times S^2$ be the product of $n$ copies of the unit sphere
$S^2 =\{x^2 +y^2 +z^2 =1\}$ in $\R^3$. Each sphere is equipped
with the standard area form $\theta$, normalized so that the total
area of $S^2$ is 1, and the symplectic structure on $M$ equals
$\theta \oplus \ldots \oplus \theta$. Denote by $x_i,y_i,z_i$ the
standard Euclidean coordinates on the $i$-th factor of $M$. It is
known \cite{EP-qs} that $M$ admits a symplectic quasi-state
$\zeta$ associated to a stable quasi-morphism so that the
functions $F = 1-2x_1^2$ and $G=1-2y_1^2$ form a $\zeta$-extremal
pair. In the case when $n=1$, the quasi-state $\zeta$ is described
in Example~\ref{exam-qs} above. }\end{exam}

\medskip

Our starting observation is that
\begin{equation}\label{eq-d-1}
d((F,G),\cK)=1\;\; \forall (F,G) \in \cE\;.
\end{equation}

\medskip
\noindent Indeed, if $F'$ and $G'$ Poisson-commute we have that
$\Pi(F',G') = 0$. Given any $\zeta$- extremal pair $(F,G)$, we
apply inequality \eqref{Pi_Lipschitz}  and get
$$2 \leq 2d((F,G),(F',G'))\;,$$
and thus $d((F,G),\cK) \geq 1$. Since $(F,0) \in \cK$, we get the
opposite inequality, and thus \eqref{eq-d-1} follows.

\medskip

The set $\cK$ possesses a natural system of ``tubular
neighborhoods"
$$\cK_N(\epsilon) := \{(F,G) \in \cH \;:\; Q_N(F,G) <
\epsilon\}\;.$$ This viewpoint is justified by  a symplectic
version of the Landau-Hadamard-Kolmogorov inequality proved in
\cite{EP-Poisson2} which implies (for $N\geq 2$) that
$$\| \{ F,G\}\|\leq {\rm const} (N)\cdot \min (||F||,||G||)^{\frac{N-2}{N-1}}
\cdot \|p(F,G)\|^{\frac{1}{N-1}},$$ for {\it some} $p\in
\cP_{N-1}$. Since $\|p(F,G)\|\leq Q_N(F,G)$, we get that for any
integer $N \geq 2$ there exists a constant $a_N
>0$ so that
\begin{equation}\label{eq-KHL}
||\{F,G\}|| \leq a_N \cdot \min (||F||,||G||)^{\frac{N-2}{N-1}}
\cdot Q_N(F,G)^{\frac{1}{N-1}}\;\; \forall F,G \in
C^{\infty}(M)\;.\end{equation} In particular, $(F,G) \in \cK$
provided $Q_N(F,G) = 0$.

Finally, we arrive to the following problem: given an extremal
pair $(F,G)$, explore the behavior of the function
$$D_N(\epsilon) := d((F,G), \cK_N(\epsilon))$$
as $\epsilon \to 0$. Noticing that for some real $b_N = b_N(F,G)$,
and all positive $\epsilon < 1$ the pair $(F,\epsilon b_N G)$ lies
in $\cK_N(\epsilon)$, we get an obvious upper bound for
$D_N(\epsilon)$:
\begin{equation}\label{eq-upper}
D_N(\epsilon) \leq 1- b_N\epsilon  \;\;\forall 0< \epsilon < 1\;.
\end{equation}

\medskip
\noindent As far as the lower bound is concerned, we claim that
\begin{equation}\label{eq-lower}
D_N(\epsilon) \geq 1- \frac{1}{2}C_N\epsilon^{\frac{1}{N}} \;\; \forall
\epsilon
> 0 \;,
\end{equation}
where $C_N$ is the constant from Theorem~\ref{main_thm}.

\medskip
\noindent This inequality immediately follows from the following
corollary of Theorem~\ref{main_thm} (cf. \cite{EPZ} for the case
$N=2$).

\begin{cor}\label{lem-Q}
For any $F,G,F',G' \in C^{\infty}(M)$ we have
\begin{equation*}
  \frac{\Pi(F,G)}{2}-d((F,G),(F',G')) \leq \frac{1}{2}C_N Q_N(F',G')^{\frac{1}{N}}\;.
\end{equation*}
\end{cor}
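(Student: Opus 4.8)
The plan is to deduce Corollary~\ref{lem-Q} directly from Theorem~\ref{main_thm} together with the Lipschitz estimate \eqref{Pi_Lipschitz} for the functional $\Pi$; no new ideas beyond these two inputs should be needed.

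First I would apply Theorem~\ref{main_thm} to the pair $(F',G')$, which gives
\[
\Pi(F',G') \leq C_N \cdot Q_N(F',G')^{1/N}.
\]
Next I would invoke the Lipschitz property \eqref{Pi_Lipschitz} of $\Pi$ with respect to the uniform distance $d$ on $\cH$, namely $|\Pi(F,G) - \Pi(F',G')| \leq 2\,d((F,G),(F',G'))$, to bound $\Pi(F,G)$ in terms of $\Pi(F',G')$:
\[
\Pi(F,G) \leq \Pi(F',G') + 2\,d((F,G),(F',G')).
\]
Combining the two displays yields
\[
\Pi(F,G) \leq C_N \cdot Q_N(F',G')^{1/N} + 2\,d((F,G),(F',G')),
\]
and dividing by $2$ and rearranging gives precisely the claimed inequality
\[
\frac{\Pi(F,G)}{2} - d((F,G),(F',G')) \leq \frac{1}{2} C_N Q_N(F',G')^{1/N}.
\]

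Since both ingredients are already established earlier in the paper, there is no genuine obstacle here; the only "work" is bookkeeping the constants and making sure the triangle-inequality step is applied in the direction that upgrades $\Pi(F',G')$ to $\Pi(F,G)$ rather than the reverse. I would also remark, as the text does, that plugging $\epsilon = Q_N(F',G')$ and taking the infimum over $(F',G') \in \cK_N(\epsilon)$ then recovers the lower bound \eqref{eq-lower} for $D_N(\epsilon)$ when $(F,G)$ is a $\zeta$-extremal pair, using $\Pi(F,G) = 2$.
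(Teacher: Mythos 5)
Your proposal is correct and coincides with the paper's own argument: both use the Lipschitz estimate \eqref{Pi_Lipschitz} to pass from $\Pi(F,G)$ to $\Pi(F',G')$ and then apply Theorem~\ref{main_thm} to the pair $(F',G')$, followed by trivial rearrangement. Nothing is missing.
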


\begin{proof} By inequality \eqref{Pi_Lipschitz},
$$\Pi(F',G') \geq \Pi(F,G) - 2d((F,G),(F',G'))\;.$$
Applying Theorem~\ref{main_thm} to $(F',G')$ we get the desired
inequality.
\end{proof}

\subsection{Discussion and open problems}

The gap between upper and lower bounds \eqref{eq-upper} and
\eqref{eq-lower} suggests the following question.

\medskip
\noindent
\begin{problem}\label{prob-1}
What is the actual asymptotical behavior of function
$D_N(\epsilon)$ as $\epsilon \to 0$? The answer is unknown even
for the specific extremal pair of functions described in Example
\ref{exam-sphere} above.
\end{problem}

\medskip

Another question related to inequality \eqref{eq-lower} is as
follows:

\medskip
\noindent
\begin{problem}\label{prob-2} Given a specific extremal pair of functions
(say, as in Example~\ref{exam-sphere} above), can one prove lower
bound \eqref{eq-lower} for $D_N(\epsilon)$ without methods of
``hard" symplectic topology (like Floer theory)? (See
\cite{Gromov} for a discussion about the dichotomy between ``hard"
and ``soft" symplectic topology).
\end{problem}

\medskip
\noindent Let is emphasize once again that our proof of
\eqref{eq-lower} uses the fact that the symplectic manifold $M=S^2
\times \ldots \times S^2$ admits a symplectic quasi-state
associated to a stable quasi-morphism on $\widetilde{Ham}(M)$
which follows from Floer-homological considerations.

Let us also note that in dimension ${\rm dim}\, M =2$ there exist
alternative constructions of symplectic quasi-states (see e.g.
\cite{Aarnes}, \cite{EP-qs}, \cite{Zap1}, \cite{Zap2},
\cite{Rosenberg}) which do not involve Floer homology. None of
those quasi-states is known to be induced by a stable
quasi-morphism. For instance, it is shown in \cite{Rosenberg} that
Py's quasi-morphism \cite{Py} gives rise to a quasi-state, but it
is unknown whether this quasi-morphism is stable. On the other
hand, Zapolsky \cite{Zap1,Zap2} proved that for a wide class of
quasi-states $\zeta$ on surfaces one has inequality
$$ \Pi(F,G):= |\zeta(F+G)-\zeta(F)-\zeta(G)| \leq \sqrt{||{F,G}||_{L_1}}\;.$$
Note that this is a sharper version of inequality
\eqref{new_bound} in Theorem~\ref{main_thm} for $N=2$, where the
uniform norm is replaced by the $L_1$-norm. Interestingly enough,
Zapolsky's argument does not involve quasi-morphisms: it is based
on methods of two-dimensional topology. This discussion leads to
the following problem.

\medskip
\noindent
\begin{problem}\label{prob-2-a} Can one extend Zapolsky's inequality to the case of iterated Poisson
brackets? More precisely, given a closed 2-dimensional symplectic
manifold $(M,\omega)$ and a quasi-state $\zeta$,  is it true that
for $N \geq 3$
$$\Pi(F,G) \leq \text{const}(N) \cdot \big{(}\sum_{p \in
\mcal{P}_{N-1}} ||p(F,G)||_{L_1}\Big{)}^{\frac{1}{N}}\;?$$ In case
the answer is affirmative, it would be interesting to develop an $L_1$-version of symplectic approximation theory on surfaces along the lines of Section~\ref{subsec-app} above.
\end{problem}

\medskip

Our current impression is that Theorem~\ref{main_thm} lies on a
rather narrow border-line between ``soft" and ``hard'' symplectic
topology, but on the ``hard" side. To illustrate this, let us
compare the inequalities
$$\Pi(F,G) \leq C_N \cdot
Q_N(F,G)^{1/N}\;\;\;\; (\spadesuit_N)$$ for the previously known
case $N=2$ and the new case $N \geq 3$. Assume that
$||F||=||G||=1$. By the symplectic Landau-Hadamard-Kolmogorov
inequality \eqref{eq-KHL} (which is proved by elementary calculus)
$$Q_2(F,G)^{\frac{1}{2}} \leq a'_N Q_N(F,G)^{\frac{1}{2N-2}}\;.$$
Thus inequality $(\spadesuit_2)$ yields
$$\Pi(F,G) \leq C'_N Q_N(F,G)^{\frac{1}{2N-2}}\;.$$
Since $2N-2 > N$ for $N \geq 3$, the latter inequality is weaker
than $(\spadesuit_N)$ provided $N \geq 3$ and $Q_N(F,G)$ is small
enough . In fact, it can be  shown \cite{Rosen} that on every
symplectic manifold of dimension $\geq 4$ there exist sequences of
functions $A_\epsilon$ and $B_\epsilon$ so that
$||A_\epsilon||=||B_\epsilon|| = 1$ and
$$Q_N(A_\epsilon,B_\epsilon) = \beta_N \cdot \epsilon^{2N-2},\;$$
where $\beta_N, N \geq 2$, is a sequence of positive numbers, and
$\epsilon \in (0,\epsilon_0)$ for some $\epsilon_0 >0$ independent
of $N$. Thus for any fixed $N$ we have that
$Q_N(A_\epsilon,B_\epsilon)^{\frac{1}{N}} \sim
\epsilon^{2-\frac{2}{N}}$ as $\epsilon\to 0$. Therefore, for a
pair of integers $N
> L \geq 2$ the upper bound for $\Pi(A_\epsilon,B_\epsilon)$ given
by $(\spadesuit_N)$ is sharper than the one given by
$(\spadesuit_L)$ provided $\epsilon$ is small enough.

\medskip

Let us mention finally that the functionals $Q_N$ have been
studied within function theory on symplectic manifolds from a
different viewpoint. It is known that $Q_2$ and $Q_3$ are lower
semi-continuous with respect to the uniform metric $d$ on $\cH$,
and in fact have rather tame local behavior
\cite{EP-Poisson1,Buh,EP-Poisson2}. It would be interesting to
investigate the lower semi-continuity of $Q_N$ for $N > 3$.

\subsection{Symplectic integrators}\label{symp_int_sect}

Our proof of Theorem~\ref{main_thm} for $N \geq 3$ follows closely
the lines of \cite{EPZ} with one technical innovation: we use
symplectic integrators for the proof of our main theorem. Their
appearance was somewhat unexpected to us: symplectic integrators
have been designed for the purposes of numerical Hamiltonian
dynamics \cite{Yoshida}, the subject which is seemingly remote
from the theme of the present work.

\medskip
\noindent

Let $\Phi_t$, $\Psi_t$, $t \in (-\eps, \eps)$, be two smooth
families of diffeomorphisms of a closed manifold $M$. We say that
they are {\it equivalent modulo} $t^{N}$, denoted
\[
\Phi_t = \Psi_t \mod t^N,
\]
if for any $f \in C^{\infty} (M)$ and any $x \in M$
\begin{equation*}
|f(\Phi_t x) - f(\Psi_t x)| = O(t^N)\ {\rm as}\ t\to 0.
\end{equation*}
Equivalently, for any $x \in M$, one should have
$\dist\br{\Phi_t(x), \Psi_t(x)} = O(t^N)$ as $t\to 0$ for any
Riemannian metric defined around $x$.

\medskip
\noindent
\begin{defn}
A \emph{symplectic integrator} of order $N$ is a set of real
numbers $\al_1, \ldots, \al_m, \be_1, \ldots, \be_m$ such that for
any $F, G \in C^{\infty}(M)$,
\begin{equation}\label{symp_int_def}
\phi_{F+G}^t  = \phi_{\al_1 F}^t \, \phi_{\be_1 G}^t
\cdot\ldots\cdot \phi_{\al_m F}^t  \, \phi_{\be_m G}^t  \mod
t^{N+1}.
\end{equation}
\end{defn}

\medskip
\noindent Note that taking $G=0$ in \eqref{symp_int_def} yields
\begin{equation*}
\phi_{F}^t = \phi_{\sum_{i=1}^m \al_i F}^t  \mod t^{N+1}.
\end{equation*}
and hence $\sum_{i=1}^m \al_i = 1$. Similarly, $\sum_{i=1}^m \be_i = 1$.

\medskip
\noindent A crucial fact used in the next section is the {\it
existence of symplectic integrators of all orders}. This was
proved by Yoshida \cite{Yoshida} (see also \cite{Rosen} for a
detailed proof).

\section{Proof of the main theorem}

\subsection{Using symplectic integrators}\label{main_thm_sect}

We start with the following estimate. Let $\{\al_i, \be_i
\}_{i=1}^m$ be a symplectic integrator of order $N-1$, where $N
\geq 2$. Let $F,G \in C^{\infty}(M)$ be Hamiltonian functions with
zero mean. Set
\[
\Psi_N^{t} = \phi_{\al_1 F}^t \phi_{\be_1 G}^t \cdot\ldots\cdot
\phi_{\al_m F}^t \phi_{\be_m G}^t
\]
and note that, by the cocycle formula (see e.g. \cite{Pbook}), the
Hamiltonian flow $\Psi_N^t$ is generated by the Hamiltonian
\begin{align}\label{K_N_sum}
K_N &= \al_1 F + \be_1 G \circ \phi_{\al_1  F}^{-t} + \ldots +
\notag \\
& + \al_m F \circ \phi_{\be_{m-1} G}^{-t}\circ \phi_{\al_{m-1}
F}^{-t} \circ\ldots\circ
\phi_{\be_1  G}^{-t}\circ \phi_{\al_1  F}^{-t} + \notag \\
& + \be_m G \circ \phi_{\al_{m} F}^{-t}\circ \phi_{\be_{m-1}
G}^{-t}\circ \phi_{\al_{m-1} F}^{-t} \circ\ldots\circ \phi_{\be_1
G}^{-t}\circ \phi_{\al_1 F}^{-t}.
\end{align}

\medskip
\noindent
\begin{prop}\label{remainder-prop-1} There exists a constant $\kappa>0 $ independent of $F$ and
$G$ such that
\begin{equation}
\label{eqn-K-N-F-G} ||K_N (t) - (F+G)|| \leq \kappa \cdot Q_N(F,G)
t^{N-1}
\end{equation}
for all $t \in [0,1]$.
\end{prop}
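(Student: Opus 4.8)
The plan is to write $K_N(t)-(F+G)$ as an iterated integral of translates of iterated Poisson brackets of $F$ and $G$, and to extract the cancellations that reduce the order from $t$ to $t^{N-1}$ from the defining property of the integrator.

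\emph{Setup and basic moves.} Write $K_N(t)=\sum_{j=1}^{2m}A_j\circ\Theta_j^{-t}$ with $A_1=\al_1F,\ A_2=\be_1G,\ A_3=\al_2F,\dots$ and $\Theta_j^{-t}=\phi_{A_{j-1}}^{-t}\circ\dots\circ\phi_{A_1}^{-t}$ (so $\Theta_1^{-t}=\mathrm{id}$); since $\sum_jA_j=F+G$ we have $K_N(t)-(F+G)=\sum_j(A_j\circ\Theta_j^{-t}-A_j)$. I will use three elementary facts: a Hamiltonian is constant along its own flow; $\{u\circ\psi,v\circ\psi\}=\{u,v\}\circ\psi$ for a symplectomorphism $\psi$; and $\frac{d}{ds}(u\circ\phi_P^{-s})=-\{u,P\}\circ\phi_P^{-s}$, so that $u\circ\phi_P^{-t}-u=-\int_0^t\{u,P\}\circ\phi_P^{-s}\,ds$. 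Note also that precomposition by a diffeomorphism of $M$ preserves the uniform norm. Peeling off the factors of $\Theta_j^{-t}$ one at a time, $A_j\circ\Theta_j^{-t}-A_j=\sum_{i=0}^{j-2}\bigl(A_j\circ\phi_{A_{i+1}}^{-t}-A_j\bigr)\circ W_i^t$ with $W_i^t=\phi_{A_i}^{-t}\circ\dots\circ\phi_{A_1}^{-t}$; by the last fact each summand equals $-\la_{ji}\int_0^t\bigl(\{F,G\}\circ\phi_{A_{i+1}}^{-s}\bigr)\circ W_i^t\,ds$, where $\la_{ji}$ is $\pm$ a product of one $\al_\bullet$ and one $\be_\bullet$ (and $\la_{ji}=0$ when $A_j,A_{i+1}$ are multiples of the same function). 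Thus $K_N(t)-(F+G)=-\sum_{j,i}\la_{ji}\int_0^t(\{F,G\}\circ\phi_{A_{i+1}}^{-s})\circ W_i^t\,ds$, which already gives $\|K_N(t)-(F+G)\|\le Ct\|\{F,G\}\|$ — this is the entire proof for $N=2$.

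\emph{Iterating, using the integrator.} Since $\Psi_N^{t}$ is generated by $K_N(t)$ and $\phi_{F+G}^t$ by the normalized, $t$-independent Hamiltonian $F+G$, and since two Hamiltonian isotopies starting at the identity which agree modulo $t^N$ are generated by normalized Hamiltonians agreeing in the uniform norm modulo $t^{N-1}$ (differentiate $\dist(\Psi_N^tx,\phi_{F+G}^tx)=O(t^N)$ once in $t$ and use that $\phi_{F+G}^t$ is a diffeomorphism), the integrator property yields $\|K_N(t)-(F+G)\|=O(t^{N-1})$. The ``principal part'' of the expression above — obtained by setting all flows to the identity, i.e. $-(\sum_{j,i}\la_{ji})\,t\,\{F,G\}$ — must then vanish to order $t^{N-1}$, forcing $\sum_{j,i}\la_{ji}=0$ once $N\ge3$. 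Subtracting this vanishing principal part, $K_N(t)-(F+G)=-\sum_{j,i}\la_{ji}\int_0^t\bigl[(\{F,G\}\circ\phi_{A_{i+1}}^{-s})\circ W_i^t-\{F,G\}\bigr]\,ds$, and each bracketed term, telescoped again, becomes a combination of one-factor differences of translates of $\{F,G\}$, each equal to $\int(\text{a bracket in }\mathcal{P}_2)\circ\phi^{-u}\,du$ up to scalars; this replaces $\mathcal{P}_1$ by $\mathcal{P}_2$ and inserts one more integration, i.e. one more power of $t$. Iterating, after $r$ rounds ($1\le r\le N-1$) one has $K_N(t)-(F+G)$ written as a bounded linear combination — coefficients being products of $r+1$ of the $\al_i,\be_i$ — of $r$-fold integrals over simplices in $[0,t]^r$ of translates of brackets in $\mathcal{P}_r$; its principal part is $t^r/r!$ times a fixed $\mathcal{P}_r$-combination, and being $O(t^{N-1})$ it vanishes whenever $r\le N-2$, which is exactly what allows round $r$ to pass to round $r+1$. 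After round $N-1$ we reach translates of the brackets $p(F,G)$, $p\in\mathcal{P}_{N-1}$, integrated over a simplex in $[0,t]^{N-1}$; since translates preserve the uniform norm this is bounded by $\kappa\,t^{N-1}\sum_{p\in\mathcal{P}_{N-1}}\|p(F,G)\|=\kappa\,t^{N-1}Q_N(F,G)$, as required.

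\emph{Main obstacle.} The technical crux is the bookkeeping in the iteration: at each of the $N-2$ intermediate rounds one must specify which flow factor to peel off, describe the resulting scalar coefficients, and verify — via $K_N(t)-(F+G)=O(t^{N-1})$, equivalently the order-$(N-1)$ identities among the $\al_i,\be_i$ coming from the Baker--Campbell--Hausdorff formula — that the principal $\mathcal{P}_r$-combination vanishes so the induction continues. Packaging this as a single induction on $r$ whose hypothesis records the word lengths, the set of surviving flows, and the bracket degree is where the effort lies; everything else, including the base case $N=2$, is routine.
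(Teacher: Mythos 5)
Your plan is correct in substance and shares the paper's key mechanism: write $K_N$ via the cocycle formula \eqref{K_N_sum}, use the integrator property to know a priori that $K_N(t)-(F+G)=O(t^{N-1})$, let this kill all lower-order terms of an explicit expansion, and bound what survives by iterated integrals (volume $O(t^{N-1})$) of translates of monomials in $\mathcal{P}_{N-1}$, whose coefficients are universal in $F,G$ so that $\kappa$ depends only on $N$ and the integrator. The execution differs in organization: the paper first proves a standalone Taylor-type composition formula with integral remainder (Proposition~\ref{composition_lemma}, by induction on the number of flows) and then discards the whole polynomial part of degree $\le N-2$ in one step, whereas you rebuild the expansion one order at a time by telescoping and subtracting the ``principal part'' at each round; your observation that one only needs the principal $\mathcal{P}_r$-combination to vanish as a function on $M$ for the given pair $(F,G)$ (not coefficientwise) is exactly what makes this variant work, and the round-by-round bookkeeping you flag is essentially the same induction the paper packages into Proposition~\ref{composition_lemma}. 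The one step you treat too lightly is the implication ``flows agree modulo $t^{N}$ $\Rightarrow$ normalized Hamiltonians agree modulo $t^{N-1}$'': this is precisely Proposition~\ref{mod N true flows} of the paper, and ``differentiate the distance once in $t$'' is not by itself a proof --- one must either compare all $t$-derivatives of $\varphi\circ\Psi_N^t$ and $\varphi\circ\phi_{F+G}^t$ at $t=0$ up to order $N-1$ (as the paper does), or, along your lines, pass from closeness of the flows tested on observables to closeness of the Hamiltonian vector fields at a common point and then to closeness of the normalized Hamiltonians via $\iota_X\omega=dH$, zero mean and compactness. Since this input is used only qualitatively (your quantitative constant comes from the final-round integral representation), the omission is fillable and does not affect the architecture of your argument.
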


\medskip
\noindent Together with \eqref{eqn-mu-Lipschitz}
Proposition~\ref{remainder-prop-1} immediately yields the following
corollary.

\begin{cor}
\label{cor-mu-integrator} Let $\mu: \tHam\to\R$ be a stable
homogeneous quasi-morphism. Then, under the hypothesis of
Proposition~\ref{remainder-prop-1},
\begin{equation}\label{symp_int_error}
|\mu(\phi_{F+G}) - \mu(\Psi_N^1)| \leq  k Q_N(F,G),
\end{equation}
where $k:= \frac{\kappa}{N}$.
\end{cor}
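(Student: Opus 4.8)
The plan is to read the corollary off from Proposition~\ref{remainder-prop-1} together with the Lipschitz estimate \eqref{eqn-mu-Lipschitz} for stable quasi-morphisms; essentially all of the quantitative work has already been done in the Proposition, so only a short argument remains.

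First I would observe that, by the cocycle formula recalled just above (see e.g. \cite{Pbook}), the path $t \mapsto \Psi_N^t$, $t \in [0,1]$, is precisely the Hamiltonian flow of the time-dependent Hamiltonian $K_N$ of \eqref{K_N_sum}; hence the element of $\tHam$ it represents, denoted $\Psi_N^1$, equals $\phi_{K_N}$. In order to apply \eqref{eqn-mu-Lipschitz}, which is stated for pairs of \emph{normalized} Hamiltonians, I would then check that $K_N(t) \in \cF$ for every $t$. This is immediate: every summand of \eqref{K_N_sum} has the form $\al_i\, F \circ \psi$ or $\be_i\, G \circ \psi$ with $\psi$ a symplectomorphism of $(M,\om)$, so $\int_M F \circ \psi\,\om^n = \int_M F\,\om^n = 0$ and likewise for $G$ (using that $F,G$ have zero mean); summing and recalling $\sum_i \al_i = \sum_i \be_i = 1$ gives $\int_M K_N(t)\,\om^n = 0$. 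Of course $F+G$ is normalized as well.

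Applying \eqref{eqn-mu-Lipschitz} to the pair $(K_N, F+G)$ and using $\Psi_N^1 = \phi_{K_N}$ now gives
\begin{equation*}
|\mu(\phi_{F+G}) - \mu(\Psi_N^1)| \leq \int_0^1 \|K_N(t) - (F+G)\|\,dt .
\end{equation*}
Substituting the bound $\|K_N(t) - (F+G)\| \leq \kappa\, Q_N(F,G)\, t^{N-1}$ from Proposition~\ref{remainder-prop-1} and evaluating $\int_0^1 t^{N-1}\,dt = 1/N$ yields $|\mu(\phi_{F+G}) - \mu(\Psi_N^1)| \leq \frac{\kappa}{N}\, Q_N(F,G) = k\, Q_N(F,G)$, which is the assertion.

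There is no real obstacle here: the entire quantitative content sits in Proposition~\ref{remainder-prop-1}. The only points needing a moment's attention are the bookkeeping ones just mentioned — that the product-of-flows path and the flow of $K_N$ are literally the same path in $\Ham$, so that no correction from the covering $\tHam$ intervenes, and that $K_N(t)$ is genuinely normalized so that the stability inequality \eqref{eqn-mu-Lipschitz} applies on the nose.
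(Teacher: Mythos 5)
Your argument is correct and is exactly the paper's (implicit) proof: apply the stability estimate \eqref{eqn-mu-Lipschitz} to the normalized Hamiltonians $K_N$ and $F+G$, note $\Psi_N^1=\phi_{K_N}$, and integrate the bound of Proposition~\ref{remainder-prop-1} to get the factor $1/N$. The normalization check you include (each summand of \eqref{K_N_sum} has zero mean since symplectomorphisms preserve $\om^n$) is a sensible bookkeeping point that the paper leaves tacit.
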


The proof of Proposition~\ref{remainder-prop-1} is given in
Section~\ref{estimating_remainder}.

\begin{proof}[Proof of Theorem~\ref{main_thm}]
Evidently it is enough to consider the case when $F$ and $G$ have
zero mean. Let $\Psi_N^t$ be as above and let $\mu$ be a stable
homogeneous quasi-morphism inducing the symplectic quasi-state
$\zeta$. Then
\[
\Psi_N^1 = \phi_{\al_1 F} \phi_{\be_1 G} \cdot\ldots\cdot
\phi_{\al_m F} \phi_{\be_m G}
\]
and hence, since $\mu$ is a quasi-morphism,
\[
|\mu(\Psi_N^1) - \sum_{i=1}^m \mu(\phi_{\al_i F}) -
\sum_{i=1}^m \mu(\phi_{\be_i G}) | \leq 2m \cdot C.
\]
Combining this with \eqref{symp_int_error} we obtain
\begin{multline*}
|\ze(F+G) - \sum_{i=1}^m \ze(\al_i G) - \sum_{i=1}^m \ze(\be_i G) | \\
 = |\mu(\phi_{F+G}) - \sum_{i=1}^m \mu(\phi_{\al_i F}) -
 \sum_{i=1}^m \mu(\phi_{\be_i G}) | \leq 2m \cdot C + k \cdot Q_N(F,G).
\end{multline*}
Note that
\begin{equation*}
\sum_{i=1}^m \ze(\al_i F) = \Bigl( \sum_{i=1}^m \al_i \Bigr) \ze(F) = \ze(F).
\end{equation*}
Similarly,
\begin{equation*}
\sum_{i=1}^m \ze(\be_i G) = \Bigl( \sum_{i=1}^m \be_i \Bigr) \ze(g) = \ze(G).
\end{equation*}
Therefore
\begin{equation}\label{unbalanced_ineqal}
\Pi(F,G) = |\ze(F+G) - \ze(F) - \ze(G)| \leq 2m \cdot C + k \cdot Q_N(F,G).
\end{equation}
Finally, note that, by the homogeneity of $\ze$, for any $E >0$
\[
\Pi(EF, EG) = E \cdot \Pi(F,G).
\]
On the other hand,
\[
Q_N(EF,EG) = E^N Q_N(F,G).
\]
Substituting both into \eqref{unbalanced_ineqal} and dividing by $E$ we obtain
\begin{equation*}
\Pi(F,G) \leq \frac{2m \cdot C}{E} + k Q_N(F,G)E^{N-1}.
\end{equation*}
The right-hand side is minimized by
\begin{equation*}
E_{min} = \Bigl( \frac{2m \cdot C}{k(N-1)Q_N(F,G)} \Bigr)^{1/N}
\end{equation*}
which yields the inequality
\begin{equation*}
\Pi(F,G) \leq C_N \cdot Q_N(F,G)^{1/N}
\end{equation*}
for
\begin{equation*}
C_N = N (N-1)^{\frac{1-N}{N}} (2mC)^{\frac{N-1}{N}}
k^{\frac{1}{N}}\;.
\end{equation*}

\end{proof}

\subsection{Estimating the remainder}\label{estimating_remainder}

In order to prove Proposition~\ref{remainder-prop-1}, we need to
write a finite order expansion for a composition of several
Hamiltonian flows. We use the following notation: Given smooth
functions $H_1, \ldots, H_n, A \in C^{\infty}(M)$ and non-negative
integers $i_1, \ldots, i_{n-1} $, such that $\sum_{j=1}^{n-1} i_j
\leq N -1$ denote $h_t^{(k)} := \phi_{H_k}^t$ for any $k$ and
$l:=N-\sum_{j=1}^{n-1} i_j$. We denote $\ad_H F:=\{F,H\}$. Set

\begin{multline*}
I_N^{(n)}(H_1, \ldots, H_n, i_1, \ldots, i_{n-1}, A, t) := \\
\int_0^t d s_1 \int_0^{s_1} d s_2 \ldots \int_0^{s_{l  - 1}}d s_l
(\ad_{H_n})^l (\ad_{H_{n-1}})^{i_{n-1}} \cdot\ldots\cdot
(\ad_{H_1})^{i_1} A \circ h_{s_l}^{(n)}
\end{multline*}
and
\[
I_N^{(1)}(H_1, A, t) = \int_0^t d s_1 \int_0^{s_1} d s_2 \ldots
\int_0^{s_{N - 1}} d s_N (\ad_{H_1})^N A \circ h_{s_N}^{(1)}.
\]

With this notation the expansion takes the following form:

\begin{prop}\label{composition_lemma}
Let $A, H_1, \ldots, H_n \in C^{\infty}(M)$.  Then
\begin{multline*}
A \circ h_t^{(1)} \circ \ldots \circ h_t^{(n)}=\\ = \sum_{i_1 +
\ldots + i_n \leq N-1} \frac{1}{i_1!} \cdot\ldots\cdot
\frac{1}{i_n !} (\ad_{H_n})^{i_n} \cdot\ldots\cdot
(\ad_{H_1})^{i_1} A \cdot t^{i_1 + \ldots + i_n} + R_N^{(n)},
\end{multline*}
where
\begin{multline*}
R_N^{(n)}=\\
=\sum_{i_1 + \ldots + i_{n-1} \leq N-1} \frac{1}{i_1 !}
\cdot\ldots\cdot \frac{1}{i_{n-1}!} I_N^{(n)}(H_1, \ldots, H_n,
i_1, \ldots, i_{n-1}, A, t) \cdot t^{\sum i_j} +\\
+R_N^{(n-1)} \circ h_t^{(n)}
\end{multline*}
and
\begin{align*}
R_N^{(1)} = I_N^{(1)}(H_1, A, t).
\end{align*}
\end{prop}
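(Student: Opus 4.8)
The plan is to prove Proposition~\ref{composition_lemma} by induction on $n$, the number of flows in the composition, with the base case $n=1$ being a finite-order Taylor expansion of $t \mapsto A \circ \phi_{H_1}^t$ with integral remainder. The fundamental tool is the classical identity
\[
\frac{d}{dt}\bigl(B \circ \phi_H^t\bigr) = \{B, H\} \circ \phi_H^t = (\ad_H B) \circ \phi_H^t,
\]
so that by iterating one gets $\frac{d^k}{dt^k}\bigl(A \circ \phi_{H_1}^t\bigr) = \bigl((\ad_{H_1})^k A\bigr)\circ\phi_{H_1}^t$. Taylor's formula with integral remainder at $t=0$ then gives exactly
\[
A \circ h_t^{(1)} = \sum_{k=0}^{N-1} \frac{t^k}{k!}(\ad_{H_1})^k A + \int_0^t\!\!\int_0^{s_1}\!\!\cdots\int_0^{s_{N-1}} (\ad_{H_1})^N A \circ h_{s_N}^{(1)}\, ds_N\cdots ds_1,
\]
which is the $n=1$ assertion with $R_N^{(1)} = I_N^{(1)}(H_1,A,t)$.

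For the inductive step, I would write $A \circ h_t^{(1)} \circ \cdots \circ h_t^{(n)} = \bigl(A \circ h_t^{(1)} \circ \cdots \circ h_t^{(n-1)}\bigr) \circ h_t^{(n)}$ and apply the induction hypothesis to the function $A$ composed with the first $n-1$ flows, obtaining a polynomial part $P = \sum_{i_1+\cdots+i_{n-1}\le N-1} \frac{1}{i_1!\cdots i_{n-1}!}(\ad_{H_{n-1}})^{i_{n-1}}\cdots(\ad_{H_1})^{i_1} A\cdot t^{i_1+\cdots+i_{n-1}}$ plus remainder $R_N^{(n-1)}$. Composing with $h_t^{(n)}$ splits the result into $P \circ h_t^{(n)}$ and $R_N^{(n-1)}\circ h_t^{(n)}$; the latter is precisely the recursive remainder term appearing in the proposition. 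For each monomial $(\ad_{H_{n-1}})^{i_{n-1}}\cdots(\ad_{H_1})^{i_1} A$ in $P$, I then expand its composition with $h_t^{(n)}$ using the $n=1$ case but truncated at order $l = N - \sum_{j=1}^{n-1} i_j$ rather than $N$; this produces the terms $\frac{t^{i_n}}{i_n!}(\ad_{H_n})^{i_n}(\ad_{H_{n-1}})^{i_{n-1}}\cdots(\ad_{H_1})^{i_1}A$ for $i_n$ ranging so that $i_1+\cdots+i_n \le N-1$, together with an integral remainder which, after multiplying by the prefactor $t^{\sum_{j<n} i_j}$ and the combinatorial coefficients, is exactly $\frac{1}{i_1!\cdots i_{n-1}!}I_N^{(n)}(H_1,\ldots,H_n,i_1,\ldots,i_{n-1},A,t)\cdot t^{\sum i_j}$. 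Summing over the multi-indices $(i_1,\ldots,i_{n-1})$ and reorganizing, the polynomial terms assemble into $\sum_{i_1+\cdots+i_n\le N-1}\prod \frac{1}{i_j!}(\ad_{H_n})^{i_n}\cdots(\ad_{H_1})^{i_1}A\cdot t^{\sum i_j}$ and the remainder terms assemble into $R_N^{(n)}$ as stated.

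The main obstacle I anticipate is purely bookkeeping: keeping the truncation orders consistent across the nested induction. The subtlety is that when expanding the monomial of degree $d := \sum_{j<n} i_j$ against the last flow, one must only expand to the \emph{complementary} order $l = N - d$ so that the total polynomial degree never exceeds $N-1$ while the remainder is genuinely $O(t^N)$; choosing a uniform truncation would either lose the needed order-$N$ remainder estimate or double-count monomials. Verifying that the coefficient $\frac{1}{i_n!}$ from the new Taylor expansion combines correctly with the inherited $\frac{1}{i_1!\cdots i_{n-1}!}$ to give $\frac{1}{i_1!\cdots i_n!}$, and that the integration domain in $I_N^{(n)}$ (an $l$-fold iterated integral over $0\le s_l\le\cdots\le s_1\le t$) matches the remainder produced by Taylor's theorem applied at order $l$, is routine but must be done carefully. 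Once the indexing is pinned down, no analytic input beyond the elementary flow identity and Taylor's theorem is needed, since $M$ is closed so all $\ad$-iterates of smooth functions are smooth and the compositions with flows are well-defined for all $t\in[0,1]$.
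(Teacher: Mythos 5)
Your proposal is correct and follows essentially the same route as the paper: induction on $n$, with the base case given by Taylor's formula with iterated-integral remainder via $\frac{d}{dt}(A\circ\phi_{H}^t)=(\ad_{H}A)\circ\phi_H^t$, and the inductive step expanding each inherited monomial against the last flow only to the complementary order $l=N-\sum_{j<n}i_j$, so that the new integral remainders are exactly the $I_N^{(n)}$ terms and the old remainder contributes $R_N^{(n-1)}\circ h_t^{(n)}$. The truncation subtlety you flag is precisely the point the paper handles by invoking $I^{(1)}_{N-i_1-\cdots-i_{n-1}}$ in its inductive step, so no further ideas are needed.
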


\medskip
\noindent \begin{rem}\label{rem-order}{\rm  While this formula may
seem complicated, its importance lies in the fact that for {\it
any} $n$ the term $I_N^{(n)}$, and hence the remainder
$R_N^{(n)}$, contains Lie monomials in variables
$H_1,\ldots,H_n,A$ involving $N$-times-iterated Poisson brackets.
Furthermore, recalling that $l=\sum_{j=1}^{n-1} i_j$, note that
$$I_N^{(n)}(H_1, \ldots, H_n, i_1, \ldots, i_{n-1}, A, t)=
O(t^l)\ {\rm as}\ t\to 0$$ because of the multiple integral of
multiplicity $l$ appearing in the definition of $I_N^{(n)}$. Hence,
for {\it any} $n$ we have (by induction) that $R_N^{(n)} = O(t^N)$
as $t\to 0$.}
\end{rem}

\medskip

\begin{proof}
The proof will proceed by induction on $n$. The case $n=1$ is
simply the Taylor expansion with the Lagrange remainder written as
a multiple integral:
\begin{align*}
A \circ h_t^{(1)} &= A+ \sum_{i=1}^{N-1} \frac{d^i}{dt^i} (A
\circ h_t^{(1)})(0) \frac{t^i}{i!} + \\
& + \int_0^t d s_1         \int_0^{s_1} d s_2 \ldots \int_0^{s_{N - 1}} d s_N \frac{d^N}{dt^N} (A \circ h_{s_N}^{(1)}) \\
    &= A + \sum_{i=1}^{N-1} (\ad_{H_1})^i A \cdot \frac{t^i}{i!} + \\
    & + \int_0^t d s_1 \int_0^{s_1} d s_2 \ldots \int_0^{s_{N - 1}} d s_N (\ad_{H_1})^N A     \circ h_{s_N}^{(1)} \\
    &= \sum_{i=0}^{N-1} \frac{1}{i!}(\ad_{H_1})^i A \cdot t^i + I_N^{(1)}(H_1, A, t).
\end{align*}
Now, assume the result holds for $n$. Then
\begin{align*}
A & \circ h_t^{(1)} \circ \ldots \circ h_t^{(n+1)} = \\
    & = \left(\sum_{i_1 + \ldots + i_n \leq N-1} \frac{1}{i_1!}
    \cdot\ldots\cdot \frac{1}{i_n !} (\ad_{H_n})^{i_n}
    \cdot\ldots\cdot (\ad_{H_1})^{i_1} A \cdot t^{i_1 + \ldots + i_n}
    + R_N^{(n)}  \right) \circ\\
    & \circ h_t^{(n+1)} =\\
    & =  \sum_{i_1 + \ldots + i_n \leq N-1} \frac{1}{i_1!}
    \cdot\ldots\cdot \frac{1}{i_n !} (\ad_{H_n})^{i_n} \cdot\ldots\cdot (\ad_{H_1})^{i_1} A
    \circ h_t^{(n+1)} \cdot t^{i_1 + \ldots + i_n} + \\
    & + R_N^{(n)} \circ h_t^{(n+1)}.
\end{align*}
Using the case $n=1$ we get
\begin{multline*}
(\ad_{H_n})^{i_n} \cdot\ldots\cdot (\ad_{H_1})^{i_1} A  \circ
h_t^{(n+1)}  = \\
= \sum_{i_{n+1}=0}^{N-1-i_1-\ldots -i_n} (\ad_{H_{n+1}})^{i_{n+1}}
(\ad_{H_n})^{i_n} \cdot\ldots\cdot
(\ad_{H_1})^{i_1} A \cdot \frac{t^{i_{n+1}}}{i_{n+1}!}+\\
 + I_{N-i_1- \ldots -i_n}^{(1)} \br{H_{n+1}, (\ad_{H_n})^{i_n}
\cdot\ldots\cdot (\ad_{H_1})^{i_1} A, t}.
\end{multline*}
Set $l=\sum_{j=1}^{n} i_j$ and note that
\begin{align*}
I&_{N-l}^{(1)} \br{H_{n+1}, (\ad_{H_n})^{i_n} \cdot\ldots\cdot (\ad_{H_1})^{i_1} A, t} =\\
    & = \int_0^t d s_1 \int_0^{s_1} d s_2 \ldots \int_0^{s_{N - l - 1}} d s_{N-l}
    (\ad_{H_{n+1}})^{N - l} (\ad_{H_n})^{i_n}
    \cdot\ldots\cdot\\
    & \cdot (\ad_{H_1})^{i_1} A \circ h_{s_{N-l}}^{(n+1)} =\\
    & = I_N^{(n+1)}(H_1, \ldots, H_{n+1}, i_1, \ldots, i_n, A, t).
\end{align*}
Therefore
\begin{align*}
A & \circ h_t^{(1)} \circ \ldots \circ h_t^{(n+1)} = \\
    &=  \sum_{i_1 + \ldots + i_n \leq N-1} \frac{1}{i_1!} \cdot\ldots\cdot \frac{1}{i_n
    !}\cdot\\
    & \cdot \left[ \sum_{i_{n+1}=0}^{N-l-1} (\ad_{H_{n+1}})^{i_{n+1}} (\ad_{H_n})^{i_n} \cdot\ldots\cdot (\ad_{H_1})^{i_1} A \cdot \frac{t^{i_{n+1}}}{i_{n+1}!} \right] \cdot t^{i_1 + \ldots + i_n} + \\
& \sum_{i_1 + \ldots + i_n \leq N-1} \frac{1}{i_1!} \cdot\ldots\cdot \frac{1}{i_n !}\cdot I_N^{(n+1)}(H_1, \ldots, H_{n+1}, i_1, \ldots, i_n, A, t) \cdot t^{i_1 + \ldots + i_n} +\\
& + R_N^{(n)} \circ h_t^{(n+1)} =\\
    &= \sum_{i_1 + \ldots + i_{n+1} \leq N-1} \frac{1}{i_1!} \cdot\ldots\cdot
    \frac{1}{i_{n+1}!}\cdot\\
    & \cdot (\ad_{H_{n+1}})^{i_{n+1}} (\ad_{H_n})^{i_n} \cdot\ldots\cdot
    (\ad_{H_1})^{i_1} A \cdot t^{i_1 + \ldots + i_{n+1}} +
    R_N^{(n+1)},
\end{align*}
which is the desired result.
\end{proof}

Finally, we need to relate the notions of equivalence modulo $t^N$
for Ha\-mil\-to\-ni\-an flows and Hamiltonian functions.

\begin{prop}\label{mod N true flows}
Let $U(t), V(t)$ be smooth time-dependent Hamiltonian fun\-ctions.
Then
\[
\phi_{U}^t = \phi_V^t \mod t^N \iff U(t) - V(t) = O(t^{N-1}) \
{\rm as}\ t\to 0.
\]
\end{prop}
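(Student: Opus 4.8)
The plan is to reduce everything to the special case $V=0$, by regarding the ``difference'' of the two flows as a Hamiltonian flow in its own right, and then to treat that case by a direct order-by-order analysis. Throughout I take, as is customary here, $U(t)$ and $V(t)$ to be normalized (so $U(t)-V(t)$ is again normalized), and I use that, $M$ being compact, all of the $O(t^{N})$ and $O(t^{N-1})$ estimates below may be taken uniform in the space variable; this follows from Taylor's formula in $t$ with smooth remainder, and it is exactly what lets one pass between assertions about flows and assertions about their generating Hamiltonians.

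For the reduction, put $\rho^{t}:=(\phi_{V}^{t})^{-1}\circ\phi_{U}^{t}$, so that $\phi_{U}^{t}=\phi_{V}^{t}\circ\rho^{t}$. By the cocycle formula for Hamiltonian flows (see \cite{Pbook}; it was used above to identify the generator $K_{N}$ of $\Psi_{N}^{t}$), the path $(\phi_{V}^{t})^{-1}$ is generated by $-V(t)\circ\phi_{V}^{t}$, and hence $\rho^{t}$ is the Hamiltonian flow of $R(t):=(U(t)-V(t))\circ\phi_{V}^{t}$. Since for small $t$ the maps $\phi_{V}^{t}$ and $(\phi_{V}^{t})^{-1}$ form a uniformly bi-Lipschitz family of diffeomorphisms of $M$, the quantities $\dist(\phi_{U}^{t}x,\phi_{V}^{t}x)$ and $\dist(\rho^{t}x,x)$ are comparable uniformly in $x$, so $\phi_{U}^{t}=\phi_{V}^{t}\mod t^{N}$ if and only if $\rho^{t}=\mathrm{id}\mod t^{N}$; and composing with the diffeomorphisms $\phi_{V}^{t}$ shows $R(t)=O(t^{N-1})$ if and only if $U(t)-V(t)=O(t^{N-1})$. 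So it suffices to prove, for a normalized time-dependent Hamiltonian $R(t)$ with flow $\rho^{t}$, that $\rho^{t}=\mathrm{id}\mod t^{N}$ if and only if $R(t)=O(t^{N-1})$.

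The implication $(\Leftarrow)$ is immediate: if $R(t)=O(t^{N-1})$, then (Taylor in $t$) $R(t,x)=t^{N-1}\widetilde{R}(t,x)$ with $\widetilde{R}$ smooth, so $\|X_{R(t)}\|_{C^{0}}=O(t^{N-1})$, and the Riemannian length of $s\mapsto\rho^{s}(x)$ is at most $\int_{0}^{t}\|X_{R(s)}\|_{C^{0}}\,ds=O(t^{N})$, i.e.\ $\dist(\rho^{t}x,x)=O(t^{N})$. For the converse, assume $\rho^{t}=\mathrm{id}\mod t^{N}$; I prove by induction on $k=0,1,\dots,N-2$ that $R^{(k)}(0)=0$, which is exactly $R(t)=O(t^{N-1})$. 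If $R^{(i)}(0)=0$ for $i<k$, then $R(s)=\tfrac{s^{k}}{k!}R^{(k)}(0)+O(s^{k+1})$, and feeding $R(s)=O(s^{k})$ through the length estimate already gives $\dist(\rho^{s}x,x)=O(s^{k+1})$. Writing $f(\rho^{t}x)-f(x)=\int_{0}^{t}\{f,R(s)\}(\rho^{s}x)\,ds$ for $f\in C^{\infty}(M)$, substituting the expansion of $R(s)$, using that $H\mapsto\{f,H\}$ is bounded from $C^{1}$ to $C^{0}$, and using $\dist(\rho^{s}x,x)=O(s^{k+1})$ to replace $\rho^{s}x$ by $x$ inside $\{f,R^{(k)}(0)\}$, one obtains
\[
f(\rho^{t}x)-f(x)=\frac{t^{k+1}}{(k+1)!}\,\{f,R^{(k)}(0)\}(x)+O(t^{k+2}).
\]
Since $k+1<N$, the hypothesis $f(\rho^{t}x)-f(x)=O(t^{N})$ forces $\{f,R^{(k)}(0)\}(x)=0$ for all $f$ and $x$, hence $X_{R^{(k)}(0)}=0$, so $R^{(k)}(0)$ is constant and therefore $0$ by normalization. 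This completes the induction, and with the reduction it proves the proposition.

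I expect the converse direction to be the only real difficulty, and within it the bootstrap: the inductive hypothesis $R(s)=O(s^{k})$ must first be run through the elementary length estimate to sharpen $\dist(\rho^{s}x,x)$ to $O(s^{k+1})$, and only with that sharper bound can one isolate the $t^{k+1}$-coefficient of $f(\rho^{t}x)-f(x)$ and conclude $R^{(k)}(0)=0$. The remaining ingredients --- the cocycle identity for $\rho^{t}$, the bi-Lipschitz comparison of distances, and the uniformity in $x$ of the $O$-estimates --- are routine given that $M$ is compact, though the uniformity is worth spelling out, since it is what makes the passage between the flow picture and the Hamiltonian picture legitimate.
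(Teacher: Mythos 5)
Your proof is correct, but it follows a genuinely different route from the paper's. The paper works with both flows simultaneously and computes the higher $t$-derivatives of $\varphi\circ\phi_U^t$ at $t=0$, obtaining $\{\varphi,U^{(n-1)}(0)\}$ plus a Lie polynomial $S_n$ in lower-order derivatives of $U$; since $S_n$ depends only on derivatives already matched at earlier stages, an induction on $n$ extracts $U^{(n-1)}(0)=V^{(n-1)}(0)$ for $n\le N-1$, and the converse is read off from the same formulas. You instead reduce to a single flow versus the identity via the cocycle formula ($\rho^t=(\phi_V^t)^{-1}\circ\phi_U^t$ is generated by $R(t)=(U(t)-V(t))\circ\phi_V^t$), and then never need derivatives of order higher than one: the easy direction is a length estimate, and the hard direction is a bootstrap in which the first-order integral identity $f(\rho^tx)-f(x)=\int_0^t\{f,R(s)\}(\rho^sx)\,ds$, combined with the sharpened distance bound $\dist(\rho^sx,x)=O(s^{k+1})$, isolates the coefficient $\{f,R^{(k)}(0)\}$ and kills the Taylor coefficients of $R$ one at a time. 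What your approach buys is that it sidesteps the structural claim about the correction terms $S_n$ (which the paper states a bit loosely --- for its induction one needs that $S_n$ involves only $U^{(i)}$ with $i\le n-2$, which is what actually holds), at the price of the cocycle reduction, the bi-Lipschitz comparison, and uniform-in-$x$ Taylor estimates, all routine on a compact $M$; the paper's computation, by contrast, is purely formal at $t=0$ and meshes with the iterated-bracket expansion used elsewhere in Section 2. Note also that your explicit normalization hypothesis (zero mean, plus connectedness of $M$ to pass from $X_{R^{(k)}(0)}=0$ to $R^{(k)}(0)\equiv 0$) is exactly the assumption the paper uses implicitly when it invokes ``$H\in\cF$'' in the same step, and it is genuinely needed: without it one could shift $U$ by a function of $t$ alone without changing the flow.
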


\begin{proof}

For any function $H = H(t)$ depending on $t$ we will denote by
$H^{(i)}$ the $i$-th derivative of $H$ with respect to $t$. Then
$$\frac{d}{d t} \ph \circ \phi_U^t =
\pois{\ph,U(t)} \circ \phi_U^t \;,$$
$$\frac{d^2}{d t^2} \ph \circ \phi_U^t
= \pois{\ph, U^{(1)}(t)}\circ \phi_U^t + \pois{\pois{\ph,
U(t)},U(t)} \circ \phi_U^t\;$$ and, generally,
$$\frac{d^n}{d
t^n}\ph \circ \phi_U^t = (\pois{\ph, U^{(n-1)}(t)}+
S_n(U,\ph,t))\circ \phi_U^t\;,$$
where $S_n(U,\ph,t)$ is a Lie polynomial involving Poisson brackets of
$U^{(i)}(t)$ and $\ph$ for $i < n$ only.
Substituting $t=0$ we get that
\begin{equation}\label{eq-vsp-chain}
\left.\frac{d^n}{d t^n}\right|_{t=0}\ph \circ \phi_U^t = \pois
{\ph, U^{(n-1)}(0)}+ S_n(U,\ph,0)\;.
\end{equation}

By definition, $\phi_{U}^t = \phi_V^t \mod t^N$ if and only if for any $\ph
\in C^{\infty}(M)$ and any $n \leq N-1$,
\[
\left.\frac{d^n}{d t^n}\right|_{t=0}\ph \circ \phi_U^t =
\left.\frac{d^n}{d t^n}\right|_{t=0}\ph \circ \phi_V^t
\]%

Assume first that  $\phi_{U}^t = \phi_V^t \mod t^N$. Thus the
right-hand sides of the equations \eqref{eq-vsp-chain} for $U$ and
for $V$ coincide for all $n=1,\ldots,N-1$. Observe that if an
autonomous function  $H \in \mcal{F}$ satisfies $\pois{\ph,H}=0$
for all $\ph \in C^{\infty}(M)$, then $H \const 0$. Thus,
increasing $n$ from $1$ to $N-1$, we consecutively get that
\begin{equation}\label{eq-vsp-chain1}
U(0)=V(0),U^{(1)}(0) = V^{(1)}(0),\ldots, U^{(N-2)}(0) =
V^{(N-2)}(0)\;,\end{equation} which is equivalent to the fact that
$U(t) - V(t) = O(t^{N-1})$ as $t\to 0$.

Vice versa, \eqref{eq-vsp-chain1} yields that the right-hand sides
of the equations \eqref{eq-vsp-chain} for $U$ and for $V$ coincide
for all $n=1,\ldots,N-1$ and hence $\phi_{U}^t = \phi_V^t \mod
t^N$.
\end{proof}

\begin{proof}[Proof of Proposition~\ref{remainder-prop-1}]
By definition, $\Psi_N^t = \phi_{F+G}^t \mod t^{N}$, and hence, by
Proposition~\ref{mod N true flows}, $K_N = F+G \mod t^{N-1}$ as
$t\to 0$. Denote $H_N := K_N - (F+G)$. Thus $H_N (t) =
O(t^{N-1})$. Applying Proposition~\ref{composition_lemma} and
Remark~\ref{rem-order}, we get that $H_N$ is the sum of the
remainders $R_{N-1}$ of each term in the sum \eqref{K_N_sum}. In
turn, by Remark~\ref{rem-order}, each such remainder is the sum of
terms of the form $c t^{N-1} p(F,G) \circ h_t$, where $\{h_t\}$ is
a path of Hamiltonian diffeomorphisms, $p$ is a monomial from
$\mcal{P}_{N-1}$ and $c$ is a constant {\it independent of $F$ and
$G$}. Thus, recalling that
$$Q_N(F,G) = \sum_{p \in \mcal{P}_{N-1}} ||p(F,G)||,$$
we get that
$$\| K_N (t) - (F + G)\| = ||H_N (t)|| \leq  \kappa Q_N(F,G) t^{N-1}\;,$$
where $\kappa$ is a constant depending only on $N$ and on the
choice of the symplectic integrator. This finishes the proof of
the proposition.
\end{proof}


\bibliographystyle{alpha}

\begin{tabular}{l}
Michael Entov \\
Department of Mathematics\\
Technion - Israel Institute of Technology\\
Haifa 32000, Israel \\
entov@math.technion.ac.il \\
\end{tabular}

\medskip

\begin{tabular}{l}
Leonid Polterovich\\
School of Mathematical Sciences\\
Tel Aviv University\\
Tel Aviv 69978, Israel\\
and\\
Department of Mathematics\\
University of Chicago\\
Chicago, IL 60637, USA\\
polterov@runbox.com\\
\end{tabular}

\medskip

\begin{tabular}{l}
Daniel Rosen\\
School of Mathematical Sciences\\
Tel Aviv University\\
Tel Aviv 69978, Israel\\
da.rosen@gmail.com\\
\end{tabular}

\end{document}